\documentclass[reqno]{amsart}

\usepackage{amsmath,amssymb,color,amsthm}
\usepackage{enumerate,enumitem}
\usepackage{booktabs,tabularx,multirow}
\usepackage{graphicx,subfigure}
\usepackage{verbatim,mathrsfs}
\usepackage{cite}
\usepackage[margin=1.5in]{geometry}

\DeclareGraphicsExtensions{.pdf}

\theoremstyle{plain}
\newtheorem{theorem}{Theorem}
\newtheorem{lemma}[theorem]{Lemma}

\newtheorem{conj*}[theorem]{Conjecture}

\newtheorem*{claim*}{Claim}

\theoremstyle{definition}

\theoremstyle{remark}

\newtheorem*{remark*}{Remark}

\newcommand{\NN}{\mathbb{N}}

\newcommand{\HH}{\mathcal{H}}
\newcommand{\EE}{\mathcal{E}}
\newcommand{\E}{\mathcal{E}}

\newcommand{\chicf}{\chi_{\text{cf}}}

\newcommand{\Exp}{{\text{E}}}
\newcommand{\Prob}{\text{Prob}}

\begin{document}

\title{Brooks Type Results for Conflict-Free Colorings  and $\{a, b\}$-factors in graphs.}

\author[M. Axenovich]{Maria Axenovich}
\address{Karlsruher Institut f\"ur Technologie, Karlsruhe, Germany}
\email{maria.aksenovich@kit.edu}

\author[J. Rollin]{Jonathan Rollin}
\address{Karlsruher Institut f\"ur Technologie, Karlsruhe, Germany}
\email{jonathan.rollin@kit.edu}

\date{\today}
 \keywords{conflict-free,  unique color,  coloring, factors, $\{1, t\}$-factors}

\maketitle

\begin{abstract}
A vertex-coloring of a hypergraph is {\it conflict-free}, if each edge contains a vertex whose color is not repeated on any other vertex of that edge.
Let~$f(r, \Delta)$ be the smallest integer~$k$ such that each~$r$-uniform hypergraph of maximum vertex degree~$\Delta$ has a conflict-free 
coloring with at most~$k$ colors. 
As shown by Tardos and Pach, similarly to a classical  Brooks' type theorem for hypergraphs,~$f(r, \Delta)\leq \Delta+1$. 
Compared to  Brooks' theorem, according to which there is  only a couple of graphs/hypergraphs that attain the~$\Delta+1$ bound, 
we show that there are several infinite classes of uniform hypergraphs for which the upper bound is attained. 
We provide bounds on $f(r, \Delta)$ in terms of~$\Delta$ for large~$\Delta$ and establish the connection between 
conflict-free colorings and so-called ~$\{t, r-t\}$-factors in~$r$-regular graphs. Here, a~$\{t, r-t\}$-factor is a factor in which each degree is either~$t$ or 
$r-t$. Among others, we disprove a  conjecture of Akbari and Kano~\cite{Kano_factorsRegGraphs} stating that there is a~$\{t,r-t\}$-factor in every~$r$-regular graph for odd~$r$ and any odd~$t<\frac{r}{3}$.
 \end{abstract}

\section{Introduction}

Given a vertex coloring of a hypergraph,  we call a vertex contained in an edge~$E$ \emph{uniquely colored in~$E$}, if its color is assigned to no other vertex in~$E$.
If every edge of a hypergraph contains a uniquely colored vertex, then the coloring is called \emph{conflict-free}.
The \emph{conflict-free chromatic number}~$\chicf= \chicf(\HH) $ of a hypergraph~$\HH$ is the minimum number of colors used in a conflict-free coloring of~$\HH$.

Conflict-free colorings are closely related to proper colorings in which each hyperedge is not monochromatic, i.e., contains at least two vertices of distinct 
colors.  The chromatic number~$\chi(\HH)$ of a hypergraph~$\HH$ is the smallest number of colors in a proper coloring of~$\HH$.
The following Brooks' type theorem for hypergraphs proved by Kostochka, Stiebitz and Wirth~\cite{HypergraphBrook}
generalizes Brooks' theorem for graphs. The maximum degree~$\Delta(\HH)$ of a hypergraph is the largest number of hyperedges containing a common vertex.
A hypergraph is connected if for any two vertices $u, v$ there is a sequence of vertices starting with $u$ and ending with $v$ such that any two consecutive vertices in this sequence
belong to a hyperedge.

\begin{theorem}[\cite{HypergraphBrook}]\label{theorem::BrooksHypergraphs}
 Let~$\HH=(V, \EE) $ be a connected hypergraph with~$|E|\geq 2$ for each edge~$E$.
 If~$|\EE|> 1$ and~$\HH$ is neither an ordinary odd cycle nor an ordinary complete graph, then~$\chi(\HH)\leq \Delta(\HH)$.
 Otherwise~$\chicf(\HH)=\chi(\HH)=\Delta(\HH)+1$.
\end{theorem}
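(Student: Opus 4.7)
The plan is to adapt Brooks' classical proof to the hypergraph setting in three stages: handle the non-$\Delta$-regular case by a greedy BFS argument, reduce the $\Delta$-regular case containing a hyperedge of size at least three to the non-regular case, and finally dispatch the remaining $2$-uniform $\Delta$-regular case using the classical Brooks' theorem for graphs. Writing $\Delta=\Delta(\HH)$ throughout, I would begin with the non-regular case—some vertex $v$ has $\deg(v)<\Delta$—by running a BFS from $v$ in the underlying adjacency of $\HH$ (two vertices adjacent iff they share a hyperedge) to order the vertices $v=v_1,v_2,\ldots,v_n$ by non-decreasing BFS distance, and then coloring greedily in reverse order $v_n,v_{n-1},\ldots,v_1$ with a palette of $\Delta$ colors. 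The key observation is that for each $i\geq 2$ the BFS parent hyperedge of $v_i$ contains some $v_j$ with $j<i$ which is still uncolored at the moment $v_i$ is colored; that hyperedge therefore cannot be forced monochromatic by our choice for $v_i$, so at most $\deg(v_i)-1\leq\Delta-1$ colors are forbidden in total. For the root $v_1=v$ one uses $\deg(v)\leq\Delta-1$ directly, giving $\chi(\HH)\leq\Delta$.

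Next, for the $\Delta$-regular case containing some hyperedge $E$ with $|E|\geq 3$, I would pick any $u\in E$ and form $\HH'$ by replacing $E$ with $E':=E\setminus\{u\}$. Then $u$ has degree $\Delta-1$ in $\HH'$, every other vertex retains degree $\Delta$, and $\HH'$ is still connected, since any $\HH$-path traversing $E$ through $u$ can be rerouted using one of the $\Delta-1\geq 1$ other hyperedges at $u$ together with $E'$. Applying the previous stage to $\HH'$ yields a proper $\Delta$-coloring that is automatically proper for $\HH$ as well, since $E\supseteq E'$ inherits two distinct colors from $E'$. The remaining case is when $\HH$ is $\Delta$-regular and $2$-uniform, i.e., a $\Delta$-regular graph; the classical Brooks' theorem then gives $\chi(\HH)\leq\Delta$ unless $\HH=K_{\Delta+1}$ or $\HH$ is an odd cycle. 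In those two exceptional cases every proper coloring of the $2$-uniform $\HH$ is automatically conflict-free, and a direct check yields $\chicf(\HH)=\chi(\HH)=\Delta+1$, using that $K_n$ forces pairwise distinct colors and that an odd cycle needs at least three colors because it is not bipartite.

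The main obstacle I foresee is the connectivity check in the second stage: shrinking $E$ by one vertex rather than deleting $E$ outright is done precisely to avoid splitting $\HH$ into components with potentially smaller maximum degree or without a low-degree vertex, which would obstruct a direct application of the first stage. Proving that shrinking preserves connectivity uses $\Delta\geq 2$ in an essential way; the residual small cases ($\Delta\leq 1$, or $|\EE|\leq 1$) must be dispatched by hand but are trivial.
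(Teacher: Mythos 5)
The paper itself gives no proof of this statement---it is quoted from Kostochka, Stiebitz and Wirth~\cite{HypergraphBrook}---so your argument has to stand on its own. Stages 1 and 3 do: the reverse-BFS greedy step correctly uses that each non-root vertex still has an uncolored co-edge-mate in its parent hyperedge when its turn comes, and the $2$-uniform regular case is exactly classical Brooks. The genuine gap is in stage 2. The claim that shrinking $E$ to $E'=E\setminus\{u\}$ preserves connectivity is false, and your rerouting (``one of the $\Delta-1\geq 1$ other hyperedges at $u$ together with $E'$'') tacitly assumes that some other hyperedge at $u$ can reach $E'$ without passing through $u$. Take $\Delta=2$ and the connected $2$-regular hypergraph on $\{u,a,b,c,d\}$ with hyperedges $E=\{u,a,b\}$, $F=\{u,c,d\}$, $\{a,b\}$ and $\{c,d\}$; every edge has size at least $2$, $|\EE|=4>1$, and it is not an ordinary graph. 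Shrinking $E$ at $u$ splits it into the components $\{a,b\}$ and $\{u,c,d\}$: the only other hyperedge at $u$ is $F$, which is disjoint from $E'$, so no rerouting exists. Worse, the component not containing $u$ retains all of its original degrees, so it is again $\Delta$-regular; stage 1 does not apply to it and the reduction becomes circular.

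The structural reason is that $u$ may be a cut vertex separating $E\setminus\{u\}$ from the union of $u$'s other hyperedges; your appeal to $\Delta\geq 2$ does not exclude this. To repair the step you must either choose $u$ to be a non-separating vertex of some hyperedge of size at least $3$ (and prove such a choice exists), or first eliminate cut vertices altogether: split $\HH$ at a cut vertex into pieces in which that vertex has degree less than $\Delta$, color each piece by stage 1, and permute color classes so the colorings agree at the cut vertex---i.e., reduce to the $2$-connected case before shrinking, as in the standard proofs of Brooks' theorem. As written, stage 2 does not go through.
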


Pach and Tardos provided  the same upper bound on~$\chicf$:
\begin{theorem}[\cite{GeneralConflictFree}]\label{theorem::GeneralUpperChicf}
For each hypergraph ~$\HH$, ~$\chicf(\HH)\leq \Delta(\HH) +1.$
\end{theorem}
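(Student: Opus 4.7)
The plan is to prove $\chicf(\HH) \le \Delta(\HH)+1$ by induction on $|V(\HH)|$; the base case $|V(\HH)|=1$ is immediate. For the inductive step, I pick any vertex $v \in V(\HH)$ and form $\HH' = \HH - v$ on the vertex set $V(\HH)\setminus\{v\}$, whose edges are $E\setminus\{v\}$ for each $E \in \EE(\HH)$, discarding any empty set that results. Deleting a vertex only decreases vertex degrees, so $\Delta(\HH') \le \Delta(\HH)$, and the inductive hypothesis supplies a conflict-free coloring $c$ of $\HH'$ using at most $\Delta(\HH)+1$ colors.

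I then extend $c$ to $v$. For every edge $E$ of $\HH$ containing $v$ with $|E|\ge 2$, the shrunk edge $E' := E\setminus\{v\}$ is a nonempty edge of $\HH'$, so by conflict-freeness it contains a vertex $u_E$ whose color $c(u_E)$ is unique inside $E'$. I will assign $v$ any color distinct from $c(u_E)$ for every such $E$; edges of the form $E=\{v\}$ impose no constraint, since $v$ is then trivially uniquely colored in $E$. Because $v$ lies in at most $\Delta(\HH)$ edges, at most $\Delta(\HH)$ colors are forbidden, so from a palette of $\Delta(\HH)+1$ colors at least one legal value remains for $c(v)$.

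It remains to verify that the extended coloring is conflict-free on all of $\HH$. Edges disjoint from $v$ are unchanged, and for each edge $E\ni v$ with $|E|\ge 2$ the witness $u_E$ stays uniquely colored in $E$: its color $c(u_E)$ appeared only once in $E'$ by construction, and the only vertex of $E$ not lying in $E'$ is $v$, whose color was explicitly chosen to avoid $c(u_E)$. The only delicate point — and the one I would expect to require the most care in writing up — is the bookkeeping around degenerate edges, namely $E=\{v\}$ (no constraint) and edges where $E'$ becomes a singleton (whose unique witness is automatic). I do not anticipate a genuine obstacle, since the counting is tight but exactly favorable: each hyperedge through $v$ forbids precisely one color, and the palette is chosen to be one larger than the degree bound.
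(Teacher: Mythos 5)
Your proof is correct, but it follows a genuinely different route from the one in the paper. You induct on the number of vertices: delete a single vertex $v$, color the shrunk hypergraph $\HH-v$ by induction, and then greedily choose a color for $v$ avoiding the at most $\Delta(\HH)$ colors of the unique witnesses in the shrunk edges through $v$; your handling of the degenerate cases ($E=\{v\}$ and edges shrinking to singletons) is exactly the bookkeeping needed, and the count of forbidden colors is right. The paper instead inducts on $\Delta$: it extracts a maximal strongly independent set $S$ (each edge meets $S$ in at most one vertex), deletes $S$ together with all incident edges — maximality forces every surviving vertex to lose at least one incident edge, so the maximum degree drops by one — and then assigns all of $S$ a single brand-new color, which automatically makes the unique $S$-vertex of each deleted edge a witness. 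Your argument is essentially the greedy extension scheme that the paper attributes to Pach and Tardos and is purely local, recoloring one vertex at a time without ever introducing a new color class wholesale; the paper's argument buys a cleaner global structure (one fresh color per degree level), which is reused almost verbatim in its Lemma~\ref{lem::inductDeltaColorable} to turn a bound $f(r,\Delta)\leq\Delta$ into $f(r,\Delta+1)\leq\Delta+1$. Both are complete proofs of the stated bound.
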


In this paper,  we find infinite classes of  hypergraphs for which the upper bound~$\Delta+1$ on the conflict-free chromatic number 
is attained and provide general  bounds in terms of the maximum degree for uniform hypergraphs.
In addition, we provide a connection between conflict-free colorings of hypergraphs and so-called ~$\{t, r-t\}$-factors in~$r$-regular graphs. Here, a~$\{t, r-t\}$-factor is a spanning subgraph  with every vertex of  degree ~$t$ or  $r-t$.  

Next we list the main results of the paper.   A hypergraph is~$r$-uniform if each hyperedge contains exactly~$r$ vertices.
The main parameter we introduce is 

$$f(r, \Delta) = \max \{\chicf(\HH):  ~~ \HH \mbox{ is }  r\mbox{-uniform},~~  \Delta(\HH)=\Delta\}.$$

In Theorem \ref{general-bounds}  we provide  bounds on $f(r, \Delta)$ when $r$ is fixed and $\Delta$ is large. 
Theorem \ref{main} provides Brook's type results.

 \begin{theorem}\label{general-bounds}
 For any integer  $r\geq 3$, there are constants $c, c'$, and $d$  depending on $r$ such that for any $\Delta > d$
 $$c   \frac{\Delta^{1/\lceil \frac{r}{2} \rceil}}{\ln(\Delta)} \leq   f(r, \Delta) \leq c' \Delta^{1/\lceil \frac{r}{2} \rceil}.$$
\end{theorem}

\begin{theorem}\label{main}
Let~$r,\Delta$ be positive integers.
\begin{enumerate} 
\item If~$r=2$ or~$\left( \Delta=2  ~\mbox{ and }  r\not\in \{3,5\}\right)$, then~$f(r, \Delta) = \Delta+1$.\label{maintheorem::DeltaPlusOneColor}
\item  There are constants $c, c_0$ such that if $r\geq c $ and $\Delta \geq c_0  \ln r $ then $f(r, \Delta) \leq \Delta$.
\item  If $\Delta \geq 3$, then $f(4, \Delta) \leq \Delta$. 
\label{maintheorem::rColors}
\end{enumerate}
\end{theorem}

Part of the above theorem is a corollary of an independent result:

\begin{theorem}\label{theorem::deg2}
 Let~$t$ be an odd positive integer.
 If~$r$ is even or~$r\geq (t+1)(t+2)$, then there is an~$r$-regular graph that has no ~$\{t,r-t\}$-factor.
\end{theorem}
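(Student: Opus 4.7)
The plan is to split on the parity of $r$: the even case follows from a quick handshake argument, while the odd case requires an explicit construction.

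If $r$ is even then $t$ and $r-t$ are both odd, so every vertex of any $\{t,r-t\}$-factor $F$ has odd degree in $F$. I would take $G=K_{r+1}$, which is $r$-regular on $r+1$ (odd) vertices; then $\sum_v \deg_F(v)$ would be a sum of an odd number of odd integers, hence odd, contradicting $\sum_v \deg_F(v) = 2|E(F)|$. Thus $K_{r+1}$ admits no $\{t,r-t\}$-factor.

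If $r$ is odd then $t$ is odd but $r-t$ is even, and the global parities of $|V(G)|$ (sum of odd degrees $r$) and of $A:=\{v:\deg_F(v)=t\}$ (sum of odd degrees $t$) are both forced to be even and are consistent, so no global parity obstruction exists. My plan is to construct $G$ with a small edge cut $(S,\bar S)$ and obtain a local parity contradiction from
\[
|A\cap S|\equiv e_F(S,\bar S)\pmod 2,
\]
which follows by summing $F$-degrees over $S$ in the identity $\sum_{v\in S}\deg_F(v)=2e_F(S)+e_F(S,\bar S)$ and reducing modulo $2$. Then I would engineer each side of the cut so that the parities of $|A\cap S|$ and $|A\cap\bar S|$ are forced by local structure to values that cannot simultaneously match any admissible value of $e_F(S,\bar S)$. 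A natural candidate glues two almost-$r$-regular ``gadget'' subgraphs at a small set of boundary vertices, with each gadget built from roughly $t+2$ nested blocks of size about $t+1$, so that the hypothesis $r\ge(t+1)(t+2)$ supplies precisely the room to build the gadget and propagate a parity from its interior out to the boundary.

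The hard part will be engineering the gadget so that the forced parity of $|A|$ inside each half is independent of the $F$-degrees at the distinguished boundary vertices. This amounts to a finite case analysis over the allowed boundary degrees combined with a handshake-based propagation of parity inside each gadget, and I expect the quadratic bound $r\ge(t+1)(t+2)$ to emerge from this analysis as the minimum size at which the propagation closes up.
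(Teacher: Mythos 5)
Your even case is correct and coincides with the paper's argument: $K_{r+1}$ has an odd number of vertices, every vertex of a $\{t,r-t\}$-factor would have odd degree (both $t$ and $r-t$ are odd when $r$ is even), and the handshake lemma gives a contradiction.

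The odd case is a plan, not a proof, and the gap is exactly the part you defer: the gadget is never constructed, and the claim that its internal structure forces a parity of $|A\cap S|$ incompatible with every admissible value of $e_F(S,\bar S)$ is never verified. Moreover, the mod-$2$ cut identity alone looks too weak to close the argument: for a cut of $c$ edges, $e_F(S,\bar S)$ a priori takes any value in $\{0,\dots,c\}$, so both parities are available unless you can pin down $e_F(S,\bar S)$ itself --- which is precisely the missing work. The paper's mechanism is genuinely different. It builds a gadget $H$ from $K_{r-1,r}$ by adding a matching on $r-t-2$ vertices of the $r$-side and a new vertex $u$ joined to the remaining $t+2$ vertices; it glues $t+1$ copies of $H$ at $u$ to form a vertex $w$ of degree $(t+1)(t+2)\le r$ (this is where the hypothesis enters --- as the degree of the identification vertex, not as a propagation depth); and it counts the $F$-edges across the bipartition of each copy modulo $r-2t$ (not modulo $2$) to show that in every copy of $H$ the vertex $w$ meets at least one $F$-edge and at least one non-$F$-edge. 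Hence $t+1\le \deg_F(w)\le r-t-1$, impossible since $\deg_F(w)\in\{t,r-t\}$: the contradiction is a degree forced into the forbidden gap at a single cut vertex, not a two-sided cut-parity mismatch. To complete your proof you would need either to produce such a construction and carry out the modular count, or to exhibit a concrete gadget for which your mod-$2$ scheme actually determines $e_F(S,\bar S)$; neither is present.
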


This disproves the following conjecture.
\begin{conj*}[Akbari and Kano~\cite{Kano_factorsRegGraphs}]\label{conj}
Let~$r$ be an odd integer and~$a,b$ denote positive integers such that~$a + b = r$ and~$a < b$. Then each~$r$-regular  graph has an~$\{a,b\}$-factor. 
\end{conj*}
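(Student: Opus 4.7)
My plan is to establish the theorem by exhibiting, in each of the two ranges of $r$, an explicit $r$-regular graph with no $\{t, r-t\}$-factor.

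\textbf{Case $r$ even.} Since $t$ is odd and $r$ is even, $r-t$ is also odd, so in any $\{t, r-t\}$-factor every vertex has odd degree. By the handshake lemma the number of vertices of the host graph must therefore be even. Consequently any $r$-regular graph on an odd number of vertices admits no $\{t, r-t\}$-factor. The complete graph $K_{r+1}$, which is $r$-regular on the odd number $r+1$ of vertices (odd because $r$ is even), settles this case.

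\textbf{Case $r$ odd, $r \geq (t+1)(t+2)$.} Here $r-t$ is even, so parity alone does not obstruct a factor and a finer cut-based argument is required. The plan is to build $G$ containing a distinguished vertex set $S$ of size roughly $t+1$ together with many pairwise-disjoint ``gadgets'' $B_{1}, \dots, B_{q}$ attached to $S$, each gadget $B_i$ being of carefully chosen odd order and engineered so that an internal parity computation (analogous to the Case $r$ even argument, but performed inside $B_i$) forces any $\{t, r-t\}$-factor of $G$ to use at least one edge between $B_i$ and $S$. Summing over $i$, any such factor uses at least $q$ edges in $E(S, V\setminus S)$. On the other hand this quantity is bounded by $\sum_{v\in S} d_F(v) \leq (r-t)|S|$, since each vertex of $S$ has factor-degree at most $r-t$. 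Choosing $q$ strictly larger than $(r-t)|S|$ produces the desired contradiction, and the hypothesis $r \geq (t+1)(t+2)$ is exactly what is used to show that this many gadgets can be attached to $S$ while keeping every vertex of $G$ of degree precisely $r$.

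\textbf{Main obstacle.} The crux is the design of the gadget $B_i$: it must have two properties simultaneously, namely (i) for \emph{every} $\{t,r-t\}$-assignment to $V(B_i)$, the internal degree sum has the wrong parity to be realized by internal edges alone, so that at least one outgoing factor edge is forced; and (ii) the gadget must admit a completion to exact $r$-regularity when assembled with $S$ and the other gadgets. A natural candidate is a small odd-order graph with one or two prescribed boundary vertices sharing edges with $S$, where the parity constraint is an invariant of the assignment. Verifying that this local obstruction is robust against \emph{all} feasible factorings, and that the resulting degree arithmetic closes precisely under $r \geq (t+1)(t+2)$, is the main technical hurdle; indeed it is this accounting that drives the quadratic lower bound on $r$ in terms of $t$.
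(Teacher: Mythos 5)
Your even-$r$ case is complete and coincides with the paper's: $K_{r+1}$ has an odd number of vertices while every vertex of a $\{t,r-t\}$-factor would have odd degree, contradicting the handshake lemma. For odd $r$, however, your argument is a plan rather than a proof, and the one concrete mechanism you propose cannot work. You want each gadget $B_i$ to carry a \emph{parity} obstruction ``analogous to the even case, but performed inside $B_i$,'' i.e., you want the parity of $\sum_{v\in B_i} d_F(v)$ to be an invariant that forces an odd (hence positive) number of factor edges to leave $B_i$. But for odd $r$ the two admissible degrees $t$ and $r-t$ have \emph{opposite} parities, so the parity of this sum depends on which vertices of $B_i$ receive degree $t$ and which receive $r-t$; it is not an invariant of the factor, and no choice of the order of $B_i$ fixes it. This is precisely why the even case is easy and the odd case is not, and it is the gap you flag as the ``main obstacle'' without resolving it.

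The missing idea, which is the heart of the paper's argument, is to replace parity by congruence modulo $r-2t$: since $t\equiv r-t\pmod{r-2t}$, \emph{every} vertex has factor-degree $\equiv t\pmod{r-2t}$ regardless of which of the two values it takes, so counting the factor edges across a cut inside the gadget (the paper's gadget is $K_{r-1,r}$ plus a partial matching on one side, attached to a hub vertex $w$ by $t+2$ edges) yields a genuine invariant. A short computation with this invariant shows that each gadget forces at least one factor edge \emph{and} at least one non-factor edge at $w$; with $t+1$ gadgets per hub this traps $d_F(w)$ strictly between $t$ and $r-t$, a contradiction. Your alternative closing step (attach $q>(r-t)|S|$ gadgets and count cut edges) might be made to work once a valid gadget exists, but without the mod-$(r-2t)$ invariant, or some substitute for it, the gadget cannot be built, so the odd case remains unproven. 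You also do not verify that your gadgets assemble into an exactly $r$-regular graph; in the paper this requires the additional step of joining $\delta+1$ copies of the hub construction, where $\delta=r-(t+1)(t+2)$, which is where the hypothesis $r\geq(t+1)(t+2)$ actually enters.
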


Note that $\chicf(\HH)=\chi(\HH)$ if each edge of $\HH$ has size $2$ or $3$.
Hence Brooks' theorem and the theorem by Kostochka et al. \cite{HypergraphBrook} give a complete characterization of $2$- and $3$-uniform hypergraphs $\HH$
for which $\chicf(\HH)= \Delta(\HH)+1$.
Here, we can also characterize the case of $4$-uniform hypergraphs.

\begin{theorem}\label{r=4}
If $\HH$ is a connected $4$-uniform hypergraph on $m$ edges, then $\chicf(\HH)= \Delta(\HH)+1$ iff
$\Delta(\HH)=1$ or  ($\HH$ is $2$-regular and $m$ is odd).
\end{theorem}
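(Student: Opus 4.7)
My plan is to split on $\Delta(\HH)$, using throughout that $\chicf(\HH) \le \Delta(\HH)+1$ holds by Theorem~\ref{theorem::GeneralUpperChicf}. The case $\Delta(\HH) \ge 3$ is immediate: part~(4) of Theorem~\ref{main} gives $\chicf(\HH) \le \Delta(\HH) < \Delta(\HH)+1$, while the right-hand side of the ``iff'' is also false, so both sides agree. If $\Delta(\HH) = 1$, the hyperedges are pairwise disjoint and each has $4 \ge 2$ vertices, which forces $\chicf(\HH) \ge 2$; combined with the upper bound this gives $\chicf(\HH) = 2 = \Delta(\HH)+1$, matching the right-hand side.

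The only substantive case is $\Delta(\HH) = 2$. I introduce the auxiliary multigraph $G$ with $V(G) = E(\HH)$ and a parallel edge between $E, E'$ for each $\HH$-vertex of degree $2$ belonging to both (degree-$1$ $\HH$-vertices contribute no edge of $G$). A $2$-coloring $c\colon V(\HH) \to \mathbb{F}_2$ is conflict-free iff every hyperedge contains an odd number of color-$1$ vertices, i.e., iff $Mc = \mathbf{1}$ over $\mathbb{F}_2$, where $M$ is the edge-vertex incidence matrix of $\HH$.

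When $m$ is odd I plan to prove $\chicf(\HH) \ge 3$ by summing the equations: $\sum_E \sum_{v \in E} c(v) = \sum_v c(v)\, d_{\HH}(v) \equiv m \pmod 2$, and since $\Delta \le 2$ only degree-$1$ $\HH$-vertices can contribute odd terms. In the tight case that every $\HH$-vertex has degree~$2$, this forces $m$ to be even, so $m$ odd forbids a conflict-free $2$-coloring and $\chicf(\HH) = 3$. When $m$ is even I plan to solve $Mc = \mathbf{1}$ by Fredholm's alternative: a dependency $S \subseteq E(\HH)$ with $\sum_{E \in S} \mathbf{1}_E = 0$ over $\mathbb{F}_2$ must meet every $\HH$-vertex an even number of times, which under $\Delta \le 2$ forces $S$ to be a union of connected components of $G$ avoiding every hyperedge that contains a degree-$1$ $\HH$-vertex. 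Under the natural connectedness assumption (in the spirit of Theorem~\ref{theorem::BrooksHypergraphs}), the only such $S$ are $\emptyset$ and possibly $E(\HH)$, both of even cardinality once $m$ is even; hence $Mc = \mathbf{1}$ is solvable and $\chicf(\HH) \le 2$.

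The main obstacle is the $m$ even case: the $\mathbb{F}_2$ dependency analysis must carefully account for the interplay between degree-$1$ $\HH$-vertices and the connected components of $G$, and it is what pins down why the sharp instances at $\Delta(\HH) = 2$ are essentially the connected $2$-regular hypergraphs whose auxiliary $4$-regular multigraph has an odd number of vertices.
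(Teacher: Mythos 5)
Your overall decomposition matches the paper's ($\Delta\ge 3$ via Theorem~\ref{main}(4), $\Delta=1$ trivially, $\Delta=2$ as the real case), but your treatment of $\Delta=2$ is a genuinely different route. The paper passes to the dual $4$-regular graph via Lemma~\ref{lem::duality} and quotes a theorem of Akiyama and Kano that a $4$-regular graph has a $\{1,3\}$-factor iff its number of vertices is even; you instead encode ``conflict-free with two colors'' as the $\mathbb{F}_2$-system $Mc=\mathbf{1}$ (an edge of size four is good iff its color distribution is $(3,1)$, i.e., iff it contains an odd number of color-$1$ vertices) and decide solvability by computing the left kernel of the incidence matrix. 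This is self-contained, replaces the cited factor theorem by elementary linear algebra, and yields the exact characterization: for $\Delta\le 2$ the left kernel is spanned by the indicators of the $2$-regular connected components, so $\chicf\le 2$ iff every such component has an even number of edges.

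Precisely because your analysis is sharper, it exposes a mismatch with the statement that your writeup does not close. For the direction ``$\Delta=2$ and $m$ odd $\Rightarrow\chicf=3$'' you only handle ``the tight case that every vertex has degree $2$''; for a connected non-regular hypergraph with $\Delta=2$ your own kernel computation shows $Mc=\mathbf{1}$ is always solvable, so $\chicf=2$ regardless of the parity of $m$. Concretely, the three edges $\{1,2,3,4\}$, $\{4,5,6,7\}$, $\{7,8,9,10\}$ give $\Delta=2$ and $m=3$ odd, yet coloring vertices $1,5,8$ red and all others blue is conflict-free. So the equivalence as literally stated is false and no argument can fill this step; the condition must be read as ``$\HH$ is $2$-regular and $m$ is odd,'' which is in fact all the paper's own proof establishes (its first move is the observation that a non-regular hypergraph with $\Delta=2$ has $\chicf\le 2$, which already contradicts the stated ``iff''). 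Likewise the connectedness you invoke is not in the hypothesis but is indispensable in both your argument and the paper's: the disjoint union of a $2$-regular component with an odd number of edges (e.g.\ $\{a,b,c,d\},\{a,b,e,f\},\{c,d,e,f\}$) and one further disjoint edge has $m$ even and $\Delta=2$ but $\chicf=3$. You should say explicitly that you are proving the corrected, per-component version of the statement rather than leaving the non-regular and disconnected cases implicit.
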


We provide some  history of the problem, background, and basic lemmas in Section \ref{back}. An important construction is given in Section \ref{constr}.
Finally, the theorems are proved in Sections $4$-$6$.

\section{Background and previous results}\label{back}

Conflict-free  colorings  were  introduced by Even, Lotker, Ron and Smorodinsky~\cite{ConflictFreeStart, SmorodinskyPhD} when considering a frequency assignment problem in wireless networks.
A set of base stations in the plane defines the vertices of the hypergraph.
A hyperedge is formed by every subset of base stations which is simultaneously reached by a mobile agent from some point in the plane.
In order to avoid interferences in communication there should be at least one base station reachable whose frequency is unique among all stations in the range.
This corresponds to a conflict-free coloring of the underlying (geometric) hypergraph, where colors correspond to frequencies.
Since frequencies are expensive due to limited bandwidth, as few frequencies (respectively colors) as possible shall be used.
A survey by Smorodinsky~\cite{ConflictFreeSurvey} summarizes several results where the hypergraph under consideration is induced by some geometric setting, for example by discs~\cite{ConflictFreeStart, PachToth_ConflictFree} or rectangles in the plane~\cite{Smoro_CFGeometricHypergraphs} and pseudodiscs~\cite{Smoro_HarPeled_CFPointsRegions}.
\bigskip

Besides these geometrically induced hypergraphs other classes of hypergraphs were considered.
Pach and Tardos~\cite{GeneralConflictFree} considered the so called \emph{conflict-free chromatic parameter} of a graph~$G$
defined as the conflict-free chromatic number of the hypergraph on the same vertex set $V(G)$ and hyperedges consisting of vertex neighborhoods in the graph. Initially this and lots of related questions were studied by Cheilaris~\cite{Cheilaris_PhD}.
Recently Glebov, Szab{\'o} and Tardos~\cite{Glebov_ConflictFreeRandom} showed upper and lower bounds on this parameter for random graphs in terms of the domination number.
In addition,  conflict-free colorings of hypergraphs whose hyperedges correspond to simple paths of a given  graph were considered ~\cite{UniqueMaxVSConflictFree, CheilarisToth_UMPath}.
A variant of conflict-free colorings using  integral colors and  each hyperedge containing  a vertex whose color  is larger than the color of every other vertex in this edge  is called  a Unique-Maximum coloring or  a vertex ranking, see \cite{EdgeRankingIntro, Bodlander_Rankings, EdgeRankingLinear}.

Research on general hypergraphs was started by Pach and Tardos~\cite{GeneralConflictFree}.
They gave several upper bounds on~$\chicf$ in terms of different parameters.
The degree of an edge $E$ in a hypergraph $\HH$ is the number of edges intersecting $E$, and the \emph{maximum edge-degree} $D(\HH)$ is the maximum of the degrees of edges in $\HH$.
For example Pach and Tardos prove~$\chicf \in O(t\cdot D^{\frac{1}{t}}\cdot \ln(D))$ if every edge has size at least~$2t-1$.
More bounds are due to Kostochka, Kumbhat and \L{}uczak~\cite{CFFewEdges}.
\begin{theorem} [\cite{CFFewEdges}]\label{KK}
If $D$ is the maximum edge-degree of an $r$-uniform hypergraph $\HH$, $D$ is sufficiently large  and $D\leq 2^{r/2}$,  then $\chicf(\HH) \leq 120 \ln D$.
\end{theorem}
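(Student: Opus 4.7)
The plan is to apply the Lov\'asz Local Lemma to a randomized coloring of $V(\HH)$. Let $k := 120\ln D$. For each edge $E$ define the bad event
\[
B_E = \bigl\{\text{every color appearing on }E\text{ is used on at least two vertices of }E\bigr\}.
\]
A coloring that avoids every $B_E$ is conflict-free. Since $B_E$ depends only on the colors of the vertices of $E$, the events $B_E$ and $B_{E'}$ are independent whenever $E\cap E'=\emptyset$, and each $B_E$ is mutually independent of at most $D$ others. By the symmetric LLL it therefore suffices to prove
\[
P(B_E)\;\le\;\frac{1}{e(D+1)}\qquad\text{for every edge }E.
\]

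To estimate $P(B_E)$, I would distinguish two regimes for $|E|$. When $|E|$ is at most a constant multiple of $k$, I would sample colors uniformly and apply a \emph{monochromatic matching} union bound: if $B_E$ holds, then every color class on $E$ has size at least $2$, so taking $\lfloor s_j/2\rfloor$ disjoint pairs from each class of size $s_j$ yields a monochromatic matching in $E$ of size at least $\lceil|E|/3\rceil$. Summing over such matchings in $K_{|E|}$, and using that a prescribed pair is monochromatic with probability $1/k$, gives
\[
P(B_E)\;\le\;\Bigl(\tfrac{3e^2|E|}{2k}\Bigr)^{|E|/3}.
\]
The hypothesis $|E|\ge r\ge 2\log_2 D$ together with $k=120\ln D$ then yields $P(B_E)\le D^{-1-\eps}$ for a fixed $\eps>0$, which beats $1/(e(D+1))$ once $D$ is sufficiently large.

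The principal obstacle is the complementary regime $|E|\gg k$: the hypothesis $D\le 2^{r/2}$ bounds $|E|$ only from below, so edges much larger than $k$ are allowed, and with a uniform coloring every color class on such an edge becomes too large for singletons to occur. To handle this regime I would switch to a non-uniform palette, for instance assigning color $i$ with probability proportional to $2^{-i}$, so that for any edge size $s$ a window of color indices near $\lceil\log_2 s\rceil$ each contribute a singleton probability of order $1$. A negative-correlation or Janson-type argument applied to the multinomial scheme should then convert this into $P(B_E)\le D^{-c}$ with $c>1$, provided the constant $120$ is chosen sufficiently large. Verifying this large-$|E|$ bound rigorously, and making the same value $k=120\ln D$ work across both regimes, is where the bulk of the technical work lies; once done, the symmetric LLL delivers the conflict-free coloring with at most $120\ln D$ colors.
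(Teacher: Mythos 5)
This statement is not proved in the paper at all: it is imported verbatim from Kostochka, Kumbhat and \L{}uczak~\cite{CFFewEdges}, so there is no in-paper argument to compare yours against. Judged on its own merits, your LLL framework and your first-regime estimate are sound: the dependency bound $d\le D$ is correct, and the monochromatic-matching union bound (using $\lfloor s_j/2\rfloor\ge s_j/3$ for color classes of size $s_j\ge 2$) does give $P(B_E)\le (3e|E|/(2k))^{|E|/3}$ up to your extra factor of $e$, which beats $1/(e(D+1))$ when the common edge size $r$ lies between $2\log_2 D$ and a suitable constant multiple of $\ln D$.

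The gap is exactly where you locate the ``bulk of the technical work,'' and the fix you sketch does not close it. First, note that the large-edge regime is not a corner case here: in the only place this paper invokes the theorem (Case~2 of the proof of Theorem~\ref{main}(3)), $D\le r\Delta$ is polynomial in $r$, so $k=120\ln D=O(\ln r)$ while every edge has size $r\gg k$ --- the theorem is used precisely where your uniform-palette bound cannot reach. Second, the geometric palette $p_i\propto 2^{-i}$ cannot deliver $P(B_E)\le 1/(e(D+1))$ in that regime: for an edge of size $s$, the expected number of colors appearing exactly once is $\sum_i sp_i(1-p_i)^{s-1}\le\sum_{j\in\ZZ}2^je^{-2^j}\approx 1.45$, a constant independent of $k$, because only $O(1)$ colors satisfy $sp_i=\Theta(1)$. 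No negative-correlation or Janson argument can push the failure probability below a constant when the expected number of uniquely occurring colors is $O(1)$; you need $P(B_E)=e^{-\Omega(k)}$. The repair for an $r$-uniform hypergraph is a \emph{flat} sub-palette at the single relevant scale: reserve $\Omega(k)$ colors each of probability $\Theta(1/r)$ (feasible precisely because $r\gtrsim k$ in this regime), so that each is unique in a given edge with probability $\Theta(1)$ and $P(B_E)\le(1-c)^{\Omega(k)}=D^{-\Omega(1)}$, with constants tuned so that the two regimes overlap. A geometric palette is what one reaches for in non-uniform hypergraphs, where all scales must be served at once; here it gives away exactly the exponential gain in $k$ that the proof needs.
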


Pach and Tardos described  a greedy algorithm to find a conflict free coloring with  at most~$\Delta+1$ colors.
We present an alternative proof based on so called \emph{strongly independent} sets of vertices, where a set~$S$ is strongly independent if each edge contains at most one vertex from~$S$.

\begin{proof}[Proof of Theorem \ref{theorem::GeneralUpperChicf}]
If~$\Delta=1$, then the hyperedges are disjoint and two colors are sufficient.
For ~$\Delta>1$, let~$S$ be a maximal strongly independent vertex set in~$\HH$.
The new  hypergraph~$\HH'$   obtained by removing~$S$ and all edges incident to~$S$ has maximum degree at most~$\Delta-1$.
 A conflict-free coloring of~$\HH'$  obtained by induction and using~$\Delta$ colors, can be 
 extended to   a conflict-free coloring of~$\HH$   by assigning a new color to each vertex in~$S$.
 Indeed, each hyperedge from~$\HH$ that is not a hyperedge of~$\HH'$ contains exactly one vertex from~$S$ and 
 thus a uniquely colored vertex.
\end{proof}

The usual and the conflict-free chromatic number may be arbitrarily far apart.
We construct a hypergraph  of chromatic  number~$2$  and conflict-free chromatic number equal to $\Delta+1$  inductively. 
 If~$\Delta=1$, take a single edge with at least two vertices. If~$\HH$ satisfies~$\chi(\HH)=2$ and~$\chicf(\HH)=k$, then construct 
 a hypergraph~$\HH'$   that is a  vertex disjoint  union of two copies of~$\HH$ and an additional edge~$E$ containing all vertices of both copies of~$\HH$.
 Coloring each copy of~$\HH$ properly with~$2$ colors gives a proper coloring of~$\HH'$  with two colors. 
However,  any  conflict-free coloring of~$\HH'$ requires ~$k+1$ colors, since one color is unique in~$E$ and hence is used in only one of the copies of~$\HH$.
 
 There is another easy construction of a hypergraph $\HH$ with maximum degree $\Delta$ and $\chicf(\HH) > \Delta$. 
 Take two vertex disjoint copies of an ordinary complete graph $K_{\Delta}$ and add an edge $E$ containing all the vertices. If $\Delta$ colors are used in a conflict-free coloring of $\HH$ then each copy of $K_{\Delta}$ uses colors $1, 2, \ldots, \Delta$. Thus $E$ uses each of the colors exactly twice, a contradiction.

 Note that the hypergraphs in these  constructions are not uniform.\\

 \noindent
Next, we prove Lemmas that will be used later.\\
 
\begin{lemma}~\label{lem::inductDeltaColorable}
 Let~$r,\Delta\in\NN$. If~$f(r,\Delta)\leq \Delta$, then ~$f(r,\Delta+1)\leq \Delta+1$.
\end{lemma}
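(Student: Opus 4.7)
The plan is to imitate the Pach--Tardos argument from the proof of Theorem~\ref{theorem::GeneralUpperChicf}, but apply the hypothesis $f(r,\Delta)\leq\Delta$ to the reduced hypergraph instead of invoking induction on $\Delta$. Given an $r$-uniform hypergraph $\HH$ with $\Delta(\HH)\leq\Delta+1$, I would pick a maximal strongly independent vertex set $S\subseteq V(\HH)$ and form the $r$-uniform hypergraph $\HH'$ obtained from $\HH$ by deleting $S$ and all edges incident to $S$. Note $\HH'$ is $r$-uniform because a deleted vertex takes away the entire edges containing it, rather than shrinking edges.

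Next I would verify that $\Delta(\HH')\leq\Delta$. By maximality of $S$, every vertex $v\in V(\HH)\setminus S$ lies in some edge that also meets $S$; this edge is removed from $\HH$, so $v$ loses at least one incident edge. Hence $\deg_{\HH'}(v)\leq\deg_\HH(v)-1\leq\Delta$. If $\Delta(\HH')=\Delta$ then the hypothesis gives a conflict-free coloring of $\HH'$ with $\Delta$ colors; and if $\Delta(\HH')<\Delta$ then Theorem~\ref{theorem::GeneralUpperChicf} yields $\chicf(\HH')\leq\Delta(\HH')+1\leq\Delta$. Either way $\HH'$ has a conflict-free coloring using colors from $\{1,\dots,\Delta\}$.

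Finally I would extend this coloring to $\HH$ by assigning the single new color $\Delta+1$ to every vertex of $S$. For any edge $E$ of $\HH$ with $E\cap S=\emptyset$, $E$ is an edge of $\HH'$ and remains conflict-free. For any edge $E$ with $E\cap S\ne\emptyset$, strong independence of $S$ forces $|E\cap S|=1$, and the unique vertex of $E\cap S$ carries the only occurrence of color $\Delta+1$ in $E$; hence $E$ contains a uniquely colored vertex. So $\HH$ admits a conflict-free coloring with $\Delta+1$ colors, which proves $f(r,\Delta+1)\leq\Delta+1$.

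The only real subtlety is the bookkeeping at the edge case $\Delta(\HH')<\Delta$, where the hypothesis does not directly apply; this is cleanly handled by the universal bound in Theorem~\ref{theorem::GeneralUpperChicf}. Otherwise the argument is a direct reuse of the strongly-independent-set reduction, and I do not anticipate any serious obstacle.
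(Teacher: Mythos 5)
Your proposal is correct and follows essentially the same route as the paper: take a maximal strongly independent set $S$, delete it together with all incident edges to drop the maximum degree to at most $\Delta$, color the remainder with $\Delta$ colors, and give all of $S$ one new color. The only difference is that you explicitly handle the case $\Delta(\HH')<\Delta$ via Theorem~\ref{theorem::GeneralUpperChicf}, a bookkeeping point the paper's proof passes over silently.
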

\begin{proof}
 Let~$\HH$ be an $r$-uniform hypergraph  of maximum degree~$\Delta+1$ and let~$S$ be  a maximal strongly independent set  in~$\HH$.
 Consider the hypergraph~$\HH'$ which is obtained by removing~$S$ and all edges incident to~$S$ from~$\HH$.
 Then~$\Delta(\HH')\leq \Delta$ since every vertex in~$\HH$ is adjacent to a vertex in~$S$.
Thus $\chicf(\HH') \leq f(r, \Delta) \leq \Delta$ by assumption on $f$.
Extending a conflict-free coloring of $\HH'$ with $\Delta$ colors to a coloring of $\HH$ in which all vertices of $S$ get the same  new color
gives a conflict-free coloring of $\HH$ with at most $\Delta +1$ colors.
\end{proof}

A hypergraph is $a$-regular  if each vertex has degree $a$.
For a hypergraph~$\HH= (V, \EE)$, the dual hypergraph~$\HH' = (\EE, \EE')$ is defined  such that  
$\EE' = \{ E_v: ~ v\in V\}$, ~$ E_v = \{ E\in \EE:   ~ v\in E\}$.
Note that a dual of a~$2$-regular~$r$-uniform hypergraph is an ordinary~$r$-regular graph. Moreover, any $r$-regular graph corresponds to a 
$2$-regular $r$-uniform hypergraph.

\begin{lemma}\label{lem::duality}
Let~$\HH$ be a~$2$-regular $r$-uniform hypergraph and~$G$ be its dual graph.
Then~$\chicf(\HH) = 2$ iff~$G$ has a~$\{1, r-1\}$-factor.
\end{lemma}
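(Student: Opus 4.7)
The plan is to turn the statement into a direct translation between two-colorings of $V(\HH)$ and subgraphs of $G$ via the duality correspondence, after which both directions become purely local and trivial.

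First I would record the structural facts. Since $\HH$ is $2$-regular, each set $E_v=\{E\in\EE:v\in E\}$ has size exactly $2$, so $G=\HH'$ is an ordinary graph (possibly with multi-edges) whose edge set is naturally indexed by $V(\HH)$ via the bijection $v\mapsto E_v$. Since $\HH$ is $r$-uniform, each vertex $E\in\EE$ of $G$ is incident to exactly $|E|=r$ edges of $G$, so $G$ is $r$-regular. In particular, assuming $r\ge 2$, any hyperedge of $\HH$ has size $\ge 2$, so $\chicf(\HH)\ge 2$, and the statement reduces to: $\HH$ admits a conflict-free $2$-coloring iff $G$ has a $\{1,r-1\}$-factor.

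Next I would carry out the local translation. Any $2$-coloring $c:V(\HH)\to\{1,2\}$ transports, via $v\mapsto E_v$, to a partition of $E(G)$ into two sets; let $F\subseteq E(G)$ be the edges corresponding to color class $1$. Fix a hyperedge $E\in\EE$. The vertices of $\HH$ lying in $E$ correspond exactly to the edges of $G$ incident to the vertex $E$ of $G$. Consequently, if $\deg_F(E)=k$, then inside the hyperedge $E$ color $1$ is used $k$ times and color $2$ is used $r-k$ times. A vertex of $E$ is uniquely colored in $E$ iff $k=1$ (in which case the lone color-$1$ vertex is unique) or $k=r-1$ (in which case the lone color-$2$ vertex is unique). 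Hence $c$ is conflict-free iff $\deg_F(E)\in\{1,r-1\}$ for every $E\in\EE=V(G)$, which is precisely the condition that $F$ is a $\{1,r-1\}$-factor of $G$.

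Both directions now follow from this equivalence: a conflict-free $2$-coloring of $\HH$ produces a $\{1,r-1\}$-factor $F$ of $G$ as the edges of color $1$; conversely, given a $\{1,r-1\}$-factor $F$, coloring $v$ by $1$ if $E_v\in F$ and by $2$ otherwise yields a conflict-free $2$-coloring of $\HH$. There is no real obstacle here: the only thing to be careful about is keeping the two incarnations of $V(\HH)=E(G)$ straight and explicitly noting that the two options ``$k=1$'' and ``$k=r-1$'' are exactly why $\{1,r-1\}$-factors and not, say, $1$-factors appear. No nontrivial combinatorics is needed beyond the definition of the dual hypergraph.
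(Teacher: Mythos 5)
Your proof is correct and follows essentially the same route as the paper: both translate a $2$-coloring of $V(\HH)$ into an edge subset $F$ of $G$ via the duality $v\mapsto E_v$ and observe that conflict-freeness of a hyperedge $E$ is equivalent to $\deg_F(E)\in\{1,r-1\}$. Your single clean equivalence even subsumes the paper's explicit remark that the complement of a $\{1,r-1\}$-factor is again a $\{1,r-1\}$-factor.
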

\begin{proof}
 Consider a conflict-free~$2$-coloring of~$\HH$ and color each edge in~$G$ with the color of its corresponding vertex in~$\HH$.
 Let~$F$ denote the set of edges of one of the color classes in~$G$.
 Then~$F$ is a~$\{1, r-1\}$-factor.
 The other way round observe that the complement of a~$\{1, r-1\}$-factor in~$G$ is a~$\{1, r-1\}$-factor as well.
 So, $G$ is an edge-disjoint union of $F_1$ and $F_2$, two $\{1, r-1\}$-factors.  Color the vertices of $\HH$ corresponding to edges of $F_1$ red, 
 and the vertices corresponding to the edges of $F_2$ blue.   This defines a conflict-free coloring of~$\HH$.
\end{proof}

The following lemma is a straightforward application of the Local Lemma that was pointed out in most papers on conflict-free colorings. We include it here for completeness.

\begin{lemma}\label{lem::LLLUpperChicf}
 Let~$r,\Delta\in\NN$,~$r,\Delta\geq 2$, and~$\HH=(V, \E)$ be an~$r$-uniform hypergraph of maximum degree~$\Delta$.
Then \[\chicf(\HH)\leq \frac{1}{2} (e r)^{1+ \frac{2}{r}}  \cdot\Delta^{1/\lceil \frac{r}{2} \rceil}.\]
\end{lemma}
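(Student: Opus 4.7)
My plan is to apply the symmetric Lov\'asz Local Lemma to a uniformly random vertex-coloring. I set $k=\lceil \tfrac{1}{2}(er)^{1+2/r}\Delta^{2/r}\rceil$ and color each vertex of $\HH$ independently and uniformly at random from $\{1,\dots,k\}$. For each hyperedge $E\in\E$, let $A_E$ be the bad event that $E$ contains no uniquely-colored vertex. It then suffices to show $\Pr(\bigcap_E\overline{A_E})>0$, which produces a conflict-free $k$-coloring.

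The first step is to estimate $\Pr(A_E)$. The event $A_E$ occurs exactly when every color class meeting $E$ has size at least $2$, i.e., when the coloring of $E$ induces a partition into blocks all of size $\geq 2$ (and thus at most $r/2$ blocks in total). Summing over all such partitions $\mathcal{P}$, and using $|\mathcal{P}|\leq r/2$,
\[
\Pr(A_E)\ \leq\ \sum_{\mathcal{P}}\frac{k^{\underline{|\mathcal{P}|}}}{k^r}\ \leq\ \frac{B_r^{*}}{k^{r/2}},
\]
where $B_r^{*}$ is the number of partitions of an $r$-set into blocks of size $\geq 2$. A routine combinatorial estimate (e.g.\ by comparison with the number of perfect matchings $(r-1)!!$ together with Stirling, or via the exponential generating function $\exp(e^x-1-x)$) yields $B_r^{*}\leq (er/2)^{r/2}$, giving $\Pr(A_E)\leq (er/(2k))^{r/2}$.

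Second, the dependency structure: $A_E$ is determined solely by the colors of the $r$ vertices of $E$, so it is mutually independent of every $A_{E'}$ with $E\cap E'=\emptyset$. Since each vertex lies in at most $\Delta$ hyperedges, $E$ meets at most $r(\Delta-1)<r\Delta$ other hyperedges, so the dependency degree is at most $r\Delta$. It then suffices to verify the symmetric LLL condition $e\cdot \Pr(A_E)\cdot r\Delta\leq 1$. Substituting the bound from the previous step, this rearranges precisely to $k\geq \tfrac{1}{2}(er)^{1+2/r}\Delta^{2/r}$, which holds by our choice of $k$. The main obstacle is pinning down the constant $\tfrac{1}{2}$ in the final bound; this reduces entirely to the combinatorial estimate $B_r^{*}\leq (er/2)^{r/2}$, while the LLL application itself is routine bookkeeping.
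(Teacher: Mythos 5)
Your overall architecture (uniform random $k$-coloring, symmetric Local Lemma with dependency degree at most $r(\Delta-1)$, and the final algebra turning $e\cdot(er/(2k))^{r/2}\cdot r\Delta\le 1$ into $k\ge\tfrac12(er)^{1+2/r}\Delta^{2/r}$) is exactly the paper's, and those parts are fine. The gap is precisely where you located it: the claim $B_r^{*}\le (er/2)^{r/2}$ is \emph{false} for all sufficiently large $r$, and neither of your suggested justifications can rescue it. Comparison with perfect matchings fails because $(r-1)!!\sim\sqrt2\,(r/e)^{r/2}$ counts only the partitions into blocks of size exactly $2$, which form a vanishing fraction of $B_r^{*}$ (already $B_6^{*}=41$ versus $5!!=15$, and $B_{20}^{*}\approx 5.2\cdot 10^{12}$ versus $19!!\approx 6.5\cdot 10^{8}$). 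For a self-contained refutation: fix $j\le r/2$, seed $j$ blocks with the pairs $\{1,2\},\dots,\{2j-1,2j\}$, and distribute the remaining $r-2j$ elements arbitrarily; this gives $j^{\,r-2j}$ distinct singleton-free partitions, so $B_r^{*}\ge\max_j j^{\,r-2j}$, which with $j=\lfloor r/\ln r\rfloor$ is $r^{r(1-o(1))}$, while $(er/2)^{r/2}=r^{\frac r2(1+o(1))}$. (The saddle-point bound from your EGF $\exp(e^x-1-x)$ gives the matching upper bound of order $(r/\ln r)^r$, so the EGF route cannot produce $(er/2)^{r/2}$ either.) Consequently your chain $\Pr(A_E)\le B_r^{*}k^{-r/2}$, while a valid inequality, is too weak: the lossy step is replacing $k^{\underline{|\mathcal P|}}$ by $k^{r/2}$ uniformly, which grossly overpays for the (very numerous) partitions with few blocks. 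Carried out honestly, your route yields only $k\gtrsim (B_r^{*})^{2/r}(er\Delta)^{2/r}\approx (r/\ln r)^2\Delta^{2/r}$, which does not give the stated constant $\tfrac12(er)^{1+2/r}$.

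The fix is the paper's probability estimate. Relax the bad event to $A_E=\{E$ receives at most $r/2$ distinct colors$\}$ (which still contains your event, and whose complement forces a uniquely colored vertex by pigeonhole), and union-bound over the $\binom{k}{\lfloor r/2\rfloor}$ candidate color sets:
\[
\Pr(A_E)\le\binom{k}{r/2}\Bigl(\tfrac{r}{2k}\Bigr)^{r}\le\frac{k^{r/2}}{(r/2)!}\Bigl(\tfrac{r}{2k}\Bigr)^{r}\le\Bigl(\tfrac{er}{2k}\Bigr)^{r/2},
\]
using $(r/2)!\ge (r/2e)^{r/2}$. This keeps the factor $k^{-(r-j)}$ penalty against colorings with few colors that your uniform bound discards, and from here your LLL bookkeeping goes through verbatim.
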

\begin{proof} 
 Color  each vertex  uniformly and independently at random  with one out of~$k$ colors.
For a hyperedge $E$, let $A_E$ denote the event that $E$  has at most $\lfloor r/2  \rfloor$ colors. 
We call $A_E$ a bad event.
Note that when there are no bad events,  each edge has more than $\lfloor r/2  \rfloor$ colors, and thus each edge has a vertex of a unique color.
Using  Lov\'asz  Local Lemma, we have $Prob(  \bigwedge _ {E \in \E}  \overline{A_E}) >0$ if 
$Prob (A_E) \leq p$ for each $E$ and $ ped <1$, where each $A_E$ is mutually independent of all but at most $d$ other bad events.
We have that  
$$Prob (A_E) \leq     \binom{k}{\lfloor r/2  \rfloor}   \left(  \left\lfloor \frac{r}{2}  \right\rfloor   \frac{1}{k}   \right)^{r}       \leq     k^{-\lceil \frac{r}{2} \rceil} \left(e\tfrac{r}{2}\right)^{\lceil\frac{r}{2}\rceil}.$$
So, let $p= k^{-\lceil \frac{r}{2} \rceil} \left(e\tfrac{r}{2}\right)^{\lceil\frac{r}{2}\rceil}$.
In addition, there are at most~$r(\Delta-1)$  edges intersecting any given edge $E$, so $d\leq r(\Delta -1)$.
Thus  $ped<1$ if $  e r \Delta k^{-\lceil \frac{r}{2} \rceil}\left(e \tfrac{r}{2}\right)^{\lceil\frac{r}{2}\rceil}<1.$
Solving for~$k$, we have that $ped<1$  if  $k\geq (e r)^{\frac{2}{r}} \tfrac{er}{2} \cdot\Delta^{1/\lceil \frac{r}{2}\rceil}$.
Therefore, there is a conflict-free coloring of $\HH$ with $k$ colors for $k \geq \frac{1}{2} (e r)^{1+ \frac{2}{r}}  \cdot\Delta^{1/\lceil \frac{r}{2} \rceil} $ and thus $\chicf(\HH)\leq 
\frac{1}{2} (e r)^{1+ \frac{2}{r}}  \cdot\Delta^{1/\lceil \frac{r}{2} \rceil}$.
\end{proof}

For additional  notations on hypergraphs we refer to~\cite{Bollobas_Hypergraphs}.
In the remaining text we call an edge \emph{good}, if it contains a uniquely colored vertex and a coloring \emph{good}, if every edge is good.


\section{Construction  of  a graph $G(t, r)$}  \label{constr}

\noindent
Let $r$ and $t$ be positive odd integers, $r\geq (t+1)(t+2)$.
To construct~$G(t, r)$, we shall consider several graphs. 
Start with a copy~$K$ of~$K_{r-1,r}$ with partite sets~$U,V$, where~$|U|=r-1$,~$|V|=r$.
A graph~$H$  is obtained by adding a matching~$M$ on~$r-t-2$ vertices from~$V$ and by adding a new vertex~$u$ connected to the~$t+2$ remaining vertices of degree~$r-1$ in~$V$.  In~$H$ every vertex in~$U\cup V$ is of degree~$r$ and the vertex~$u$ is of degree~$t+2$.

 \noindent
 Take~$t+1$ vertex-disjoint copies~$H_1, \ldots, H_{t+1}$ of~$H$ and identify all copies of the vertex~$u$. 
 Call the resulting  vertex~$w$, and the resulting graph~$G'$.  Moreover, call the copy of~$M$ in~$H_i$ as~$M_i$,  and
 call the copies of~$U$ and~$V$ as~$U_i$ and~$V_i$, respectively, ~$i=1, \ldots, t+1$. 
 The degree of~$w$ in~$G'$ is~$(t+1)(t+2)\leq  r$ and the degree of each other 
 vertex is~$r$.
 Let~$\delta:=r-(t+1)(t+2)$.\\
 
 \noindent
 The graph~$G(t,r)$ is defined by taking~$\delta+1$ copies of~$G'$ and adding an edge between each two copies of~$w$.
 Since every copy of~$w$ receives~$\delta$ additional edges, the graph~$G(t,r)$ is~$r$-regular.
 See Figure~\ref{fig::oddSimpleNoFactor} for an illustration of the case~$r=7$,~$t=1$ and~$\delta=7-2\cdot 3=1$.
This concludes the Construction.

\begin{figure}[tb]
 \label{fig::oddSimpleNoFactor}
 \centering
 \includegraphics[height=4cm]{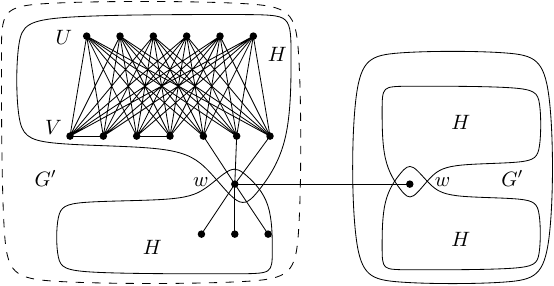}
 \caption{A~$7$-regular graph without~$\{1,6\}$-factor.}
\end{figure}

~\\

\section{Proof of Theorem \ref{general-bounds}}

The  upper bound  follows immediately from   Lemma \ref{lem::LLLUpperChicf}.  For the  lower bound,  we shall also employ a random argument generalizing the theorem of Kostochka et al. \cite{CFFewEdges} to uniform hypergraphs with even or odd number of vertices in each edge, and with bounded maximum degree. We omit ceilings and floors where it is clear from context. We always keep those in the leading terms.
 Let $k=\Delta^{ 1/ \lceil  \frac{r}{2} \rceil  }   \frac{1}{1600\ln(\Delta)}$, 
 $n=6k$ and $m=\frac{n\Delta}{2r}$.  Assume that $\Delta$ is sufficiently large.  We shall consider an $r$-uniform hypergraph on a vertex set $V$, $|V|=n$ with at most $m$ edges by picking $m$ hyperedges uniformly at random (with possible repetitions). \\

Formally, introduce a uniform distribution on elements of $\binom{V}{r}$,   i.e.   choose a set   $F\in\binom{V}{r}$   with probability $p=\binom{n}{r}^{-1}$.
For a sequence of $m$ hyperedges $F_1, \ldots, F_m$, the probability of choosing all of them is  $ p^m$.
So, this gives a distribution on a set of such sequences, each corresponding to a hypergraph on at most $m$ edges.\\

We shall show that with a positive probability the selected sequence of hyperedges gives a hypergraph with maximum degree at most $\Delta$ and conflict-free chromatic number greater than $k$.
Let $\HH$ be a  hypergraph with hyperedge set $\{F_1, \ldots, F_m\}$. \\

 We consider $\Delta(\HH)$ first.    In the following calculations we shall be using the fact that $\Delta \geq k^{\lceil \frac{r}{2} \rceil } 1600^{\lceil \frac{r}{2} \rceil}\ln k \geq k^{\lceil \frac{r}{2} \rceil } 40^{r}\ln k$.
 Further, we assume that $r$ is fixed and $\Delta$ is sufficiently large, i.e., $k$ is sufficiently large.
 For a vertex $v$ let $X_v$ denote the number of  $F_i$s  containing $v$.
 Let $Y_i=1$ if  $F_i$ contains $v$, let $Y_i=0$ otherwise. Then $\Prob(Y_i=1) = {\binom{n-1}{r-1}}/{\binom{n}{r}}= \frac{r}{n}$ and $X_v = Y_1+\ldots + Y_m$. Thus $\Exp(X_v)= m \frac{r}{n} = \Delta/2$.
 
 Using Chernoff  bound  $ \Prob( X_v  > \lambda+  \Exp(X_v)) < \exp(-2\lambda^2/m)$   with  $\lambda = \Delta/2$, we have

 \begin{align*}
\Prob(X_v >\Delta) &= \, \Prob(X_v > \Delta/2 + \Delta/2) \\
& \leq \,  \exp ( -2 (\Delta/2)^2/m) \\
& =  \, \exp\left(-  \frac{r\Delta  }{n}  \right) \\
&\leq   \exp\left( - \frac{r k^{\lceil \frac{r}{2} \rceil } 40^{r}\ln k }{6k}  \right) \\
& \leq  \,  \exp( -      k^{{\lceil \frac{r}{2}\rceil  }-1}).
 \end{align*}   
 
Then
 
  \begin{align} \label{max-degree}
\Prob(\Delta(\HH) > \Delta) & \leq \,  \Prob( X_v>\Delta \mbox{ for some } v) \nonumber  \\ 
 & \leq \, n\, \Prob(X_v> \Delta) \nonumber \\
 & \leq \, n \exp( -k^{{\lceil \frac{r}{2}\rceil  }-1}) \nonumber \\
 & = \, \exp( \ln 6k - k^{{\lceil \frac{r}{2}\rceil  }-1}) \nonumber \\
 &< 1/2.
\end{align}

 Next we consider $\chicf(\HH)$.  Fix a $k$-coloring $\chi$ of $V$. 
 In this part, we use inequalities  $\left( \frac{a}{b} \right)^b  \leq \binom{a}{b} \leq \left(  \frac{ea}{b}\right)^b$.
 As was observed by Kostochka et al.  \cite{CFFewEdges},  there are pairwise disjoint monochromatic sets $A_1, \ldots,  A_k$ each of size $\lfloor \frac{n}{2k} \rfloor = 3$. 
 We say that $F_i$ is bad if it has no uniquely colored vertex.  The probability that $F_i$ is bad  is bounded from below by the probability that $F_i$ has   $2$ or $3$  (depending on parity of $r$) vertices in one of $A_i$, and exactly $2$ or $0$ vertices in the rest of $A_i$s.  In this case the number of $A_i$'s with some elements of $F_i$ is   $\lfloor \frac{r}{2} \rfloor$. Thus, the probability that $F_i$ is bad is at least   the number of ways to choose $\lfloor\frac{r}{2}\rfloor$ of the $A_i$'s (with some elements of $F_i$) multiplied by $p$.  So, 
 \begin{align*}
 \Prob (F_i \mbox{ is bad }) &\geq \, \binom{k}{ \lfloor \frac{r}{2} \rfloor  }  \binom{n}{r}^{-1}\\
 &\geq \,   \left(\frac{k}{\lfloor \frac{r}{2} \rfloor}\right) ^{\lfloor \frac{r}{2}\rfloor}   \left( \frac {r}{e6k}   \right)^r  \\
 &\geq \, C k^{-\lceil  \frac{r}{2} \rceil},
 \end{align*}
 For a positive constant $C=C(r)\geq  \left(\frac{1}{r/2}\right)^{r/2} r^r  \left(\frac{1}{6e}\right)^r \geq   \left(  \frac{ 1}{6e}\right)^r \geq  20^{-r}$.

 Since $\chi$ is a conflict-free coloring of $F_1, \ldots, F_m$ iff each $F_i$ is not bad, we have
 
 $$\Prob(\chi \mbox{ is conflict-free coloring of }  \HH)  \leq  (1 - C k^{-\lceil  \frac{r}{2} \rceil} )^m.$$
 Looking at all $k^n$ possible $k$-colorings of $V$, we have
 
 \begin{align} \label{cf} 
 \Prob( \chi_{cf} (\HH) \leq k) &\leq \,  k^n (1- c k^{-\lceil  \frac{r}{2} \rceil})^m \nonumber\\
 & \leq \,  \exp( n \ln k ) \exp ( -  20^{-r}  k^{-\lceil  \frac{r}{2} \rceil}m)\nonumber \\
 & = \,  \exp\left( 6k\ln k  -  20^{-r} k^{-\lceil  \frac{r}{2}  \rceil }\frac{n\Delta}{2r}\right)\nonumber\\
 & \leq \,  \exp\left( 6k\ln k  -  20^{-r} k^{-\lceil  \frac{r}{2}  \rceil }\frac{6k }{2r}40^r k^{\lceil \frac{r}{2} \rceil } \ln(k)\right)\nonumber\\
  & \leq \,  \exp ( 6k\ln k  -  \tfrac{3}{r} 2^r  k\ln k)  \nonumber\\
  & \leq \,  \exp ( 6k\ln k  -  8  k\ln k)  \nonumber\\
  &< \, 1/2.
 \end{align}

 Putting (\ref{max-degree}) and (\ref{cf}) together, we have that with positive probability there is an $r$-uniform hypergraph with maximum degree at most $\Delta$ and 
 conflict-free chromatic number greater than $k$.

\qed

\section{Proofs of Theorems~\ref{main}(1), (2)    and~\ref{theorem::deg2}} \label{proofs1}

\begin{proof} [Proof  of Theorem ~\ref{theorem::deg2}]

 First we shall show that for any   odd natural numbers~$r,t$ with~$r\geq (t+1)(t+2)$,  a graph ~$G=G(t,r)$  given in Construction \ref{constr}  has no ~$\{t,r-t\}$-factor. Recall that $G$ is built of copies of $G'$ that is in turn constructed from copies of $H_i$.

 Assume there is a~$\{t,r-t\}$-factor~$F$ of~$G$. Consider one of the copies of $G'$.
 We shall show that in each of~$H_1, \ldots, H_{t+1}$ there is at least one edge incident to~$w$ that belongs to~$F$ and there 
 is at least one edge incident to~$w$ that does not belong to~$F$.

 Assume first  that in~$H_1$, none of the edges incident to~$w$ belongs to~$F$.
 Let~$x$ be the number of edges of~$M_1$ in~$F$.  
 The degree of each vertex in~$U_1$ in~$F$ is either ~$t$ or~$r-t$,  that is~$t$ modulo~$r-2t$.
 Thus the  number of edges of~$F$ between~$U_1$ and~$V_1$  modulo~$r-2t$   is ~$(r-1)t$.
 On the other hand (counting the edges incident to~$V_1$) it is ~$rt - 2x$ modulo~$r-2t$.
So, ~$ rt  - t =  rt - 2x   \pmod{r-2t}$   and  ~$2x -t  = 0 \pmod{r- 2t}.$
We also have that~$2x - t \geq  -t >  -r +2t$, so using the fact that ~$2x-t$  is odd, we have that~$2x -t = k(r-2t)$ for a positive integer ~$k$. Thus ~$2x-t \geq  r-2t$, i.e.,~$2x \geq r-t$, a contradiction since~$2x \leq r-t-2$ by construction.
 
 Assume further, that in~$H_1$ all of the edges incident to~$w$ belong to~$F$.
 Then  the number of edges of~$F$  between~$U_1$ and~$V_1$ is ~$(r-1)t =  rt - (t+2) - 2x  \pmod{r-2t}$.
 Then ~$2x+2 = 0 \pmod{r-2t}$.
Since~$2x+2~$ is an even positive number and ~$r-2t$ is odd, we have that 
$2x+2 \geq 2(r-2t) = 2r - 4t$, a contradiction since~$2x + 2 \leq r-t$.
 
 This shows that in each of the~$H_i$'s, there is an edge of~$F$ and a non-edge of~$F$ incident to~$w$. 
 Thus, the degree of~$w$ in~$F$ is at least~$t+1$ and at most~$r-t-1$, a contradiction.

~\\
Now, observe that  for any odd~$t$ and even~$r$, ~$K_{r+1}$ does not have a~$\{t,r-t\}$-factor.
Indeed,  since~$t$ is odd and~$r$ is even, a~$\{t, r-t\}$ factor has odd degrees. However,~$K_{r+1}$ has an odd number of vertices, 
 thus~$F$ is a graph with odd degrees and an odd number of vertices, a contradiction.
 \end{proof}

\vskip 0.2cm
 
\begin{proof}[Proof of Theorem \ref{main}(1)]

Note first that if~$r\in \{2,3\}$ then~$\chi(\HH)= \chicf(\HH)$ because any proper coloring is also a conflict-free 
coloring. Thus, we know precisely for what~$2$- and ~$3$-uniform  hypergraphs~$\chicf=\Delta+1$ from 
Theorem~\ref{theorem::BrooksHypergraphs}. In particular,~$f(2, \Delta) = \Delta+1$ and~$f(3,1)=2=\Delta+1$, but~$f(3,\Delta)\leq\Delta$ if~$\Delta\geq 2$.
 Theorem \ref{theorem::deg2} and  Lemma~\ref{lem::duality}  imply that ~$f(r,2)=3$ for all~$r\not\in\{3,5\}$.   \end{proof}   

\vskip 0.2cm

\begin{proof}[Proof of Theorem \ref{main}(2)] 
 Consider  $\HH$,   an $r$-uniform hypergraph with maximum degree $\Delta$ and maximum edge-degree $D$. 

{\bf Case 1} ~~ $\Delta \geq 2^{\frac{r}{2-r}} (er)^{\frac{r+2}{r-2}}$.  Then 
$er  \leq ( \Delta 2^{\frac{r}{r-2} })^{\frac{r-2}{r+2}}$. From Lemma~\ref{lem::LLLUpperChicf} $\chicf(\HH) \leq \frac{1}{2} (e r)^{1+ \frac{2}{r}} 
 \cdot\Delta^{\frac{2}{r}}\leq     \frac{1}{2}  ( \Delta 2^{\frac {r}{r-2}} )^{\frac{r-2}{r+2}\frac{r+2}{r} } \cdot\Delta^{\frac{2}{r}} \leq    \Delta $ .

{\bf Case 2}  ~~ $\Delta < 2^{\frac{r}{2-r}} (er)^{\frac{r+2}{r-2}} $. Then  $D\leq r\Delta \leq  r\cdot 2^{\frac{r}{2-r}} (er)^{\frac{r+2}{r-2}}\leq  
c' r^{2+ \frac{4}{r-2}}   \leq  2^{r/2}$  for sufficiently large $r$ and a constant $c'$. 
By Theorem \ref{KK}, $\chicf(\HH)\leq 120 \ln D \leq  120 (\ln c' + \frac{4}{r-2} \ln r)
 \leq 120 \ln c'  +  \ln r \leq \Delta$ since $\Delta \geq c_0 \ln r $. Here, $D$ is sufficiently large since $D \geq \Delta$.
\end{proof}

\vskip 0.2cm
\vskip 0.2cm


\section{Proof of Theorems \ref{main}(3) and \ref{r=4}} \label{proofs3}

We say that a sequence of edges $E_1, \ldots, E_k$ is an \emph{$E$-$E'$-path} of length $k$  if  $E=E_1,  E'=E_k$, and  $E_i\cap E_{i+1} \neq \emptyset $, $i=1, \ldots, k-1$.
We say that an $E$-$E'$-path is an $E$-$E'$-{\it geodesic} if this is an $E$-$E'$-path of smallest length. This length is also called a distance between $E$ and $E'$.
We call $|E_{k-1}\cap E_k |$ the {\it the tail-size  of the geodesic}.  A set $S$ of vertices \emph{separates} $\HH=(V,\E)$ if 
the hypergraph $\HH'=(V-S,  \{ E-S:  E\in \E\}) $ is disconnected. We denote $\HH'$ as $\HH-S$.
A hypergraph is called \emph{intersecting} if $E\cap E'\neq\emptyset$ for any pair of edges $E$ and $E'$.

\begin{lemma}\label{sep}
If $\HH$ is an $r$-uniform connected hypergraph, then there is a set $S$ of $r-1$ vertices contained in an edge of $\HH$ such that 
$\HH-S$ is connected.
\end{lemma}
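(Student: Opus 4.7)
The plan is to construct the set $S$ using a minimum-size connected spanning sub-hypergraph $\mathcal{T} \subseteq \E$. Such a $\mathcal{T}$ exists because $\HH$ itself is connected and spans $V$. The guiding observation is that writing the connected components of $\HH[V \setminus E]$ as $C_1, \ldots, C_k$, a sufficient condition for $\HH - (E \setminus \{v\})$ to be connected is that $v$ lies in an edge of $\HH$ meeting each $C_i$: the singleton $\{v\}$ (the residue of $E$) then links every component through the surviving edges.

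First I would order the edges of $\mathcal{T}$ as $E_1, \ldots, E_m$ so that each $E_i$ with $i \geq 2$ shares at least one vertex with $V_{i-1} := E_1 \cup \cdots \cup E_{i-1}$; such an ordering exists by connectedness of $\mathcal{T}$. Minimality of $|\mathcal{T}|$ forces each $E_i$ to additionally contribute at least one new vertex, so $1 \leq |E_i \cap V_{i-1}| \leq r-1$. I would take $E := E_m$ and $v \in E \cap V_{m-1}$, and set $S := E \setminus \{v\}$. In the favourable case $|E \cap V_{m-1}| = 1$, the vertex $v$ is forced to be the unique shared vertex and $S$ consists precisely of the new vertices of $E_m$. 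No earlier edge $E_i$ contains any vertex of $S$, so the edges $E_1, \ldots, E_{m-1}$ remain unaltered when restricted to $V \setminus S = V_{m-1}$, and they form a connected spanning sub-hypergraph of $\HH - S$.

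The main obstacle is the case $|E \cap V_{m-1}| \geq 2$ for every valid ordering of every minimum $\mathcal{T}$, which can arise when $|V|-1$ is not a multiple of $r-1$. I would handle this by induction on $|\E|$. If $\mathcal{T} \subsetneq \E$, I pick any $E^* \in \E \setminus \mathcal{T}$; then $\HH - E^* := (V, \E \setminus \{E^*\})$ is still a connected $r$-uniform hypergraph spanning $V$ (since $\mathcal{T}$ survives), so the induction hypothesis yields a valid pair $(E', v')$ for $\HH - E^*$, and this pair remains valid for $\HH$ because re-adding $E^*$ cannot destroy connectivity of $\HH - (E' \setminus \{v'\})$. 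If instead $\mathcal{T} = \E$, every edge of $\HH$ is essential and, by minimality, carries at least one ``unique'' vertex (appearing only in that edge); I would then choose $v$ among the shared vertices of $E$ with $V_{m-1}$ so carefully that the removal of $E \setminus \{v\}$ does not disconnect the remaining edges, exploiting that the ``old'' vertices of $E$ distinct from $v$ each lie in further edges of $\mathcal{T}$ and hence their removal can be rerouted around via $v$ and the surviving shared structure.
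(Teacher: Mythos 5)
Your reduction to the case $\mathcal{T}=\E$ and your treatment of the favourable case $|E_m\cap V_{m-1}|=1$ are both correct, but the last paragraph --- which is where the whole difficulty of the lemma lives --- is not a proof. The claim that the old vertices of $E_m$ other than $v$ ``can be rerouted around via $v$'' is false in general: those vertices may be exactly what attaches the private vertices of \emph{other} edges to the rest of the hypergraph, and once they are deleted nothing reroutes through $v$. Concretely, take $r=3$ and the four edges $A=\{1,2,a\}$, $B=\{a,3,b\}$, $C=\{b,4,5\}$, $D=\{a,b,6\}$. Every edge has a private vertex, so $\mathcal{T}=\E$ is the unique minimal connected spanning sub-hypergraph, and $A,B,C,D$ is a valid ordering whose last edge $D$ meets $V_3$ in $\{a,b\}$. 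Keeping $v=a$ deletes $b$ and isolates $C-S=\{4,5\}$; keeping $v=b$ deletes $a$ and isolates $A-S=\{1,2\}$. So for this ordering and this choice of last edge no choice of $v$ works, and the lemma is only rescued because a \emph{different} ordering (one ending with $A$ or with $C$) happens to land in the favourable case. Your proposal gives no criterion for selecting such an ordering and no argument that a workable one always exists; moreover the ``triangle'' $\{1,2,3\},\{3,4,5\},\{5,6,1\}$ shows that every ordering of every minimal $\mathcal{T}$ can end with an edge having two old vertices, so the hard case is genuinely non-vacuous and cannot be dismissed.

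What is missing is precisely the idea the paper uses: an extremal choice of which edge to (partially) delete. The paper takes a pair of edges $E,E'$ at maximum distance and, among all geodesics realizing that distance, one whose final intersection $E_{k-1}\cap E_k$ is as small as possible; the kept vertex $v$ is chosen in that intersection, and \emph{both} extremality conditions are used to show that $S=E_k\setminus\{v\}$ cannot separate $\HH$. This is the hypergraph analogue of ``an endpoint of a longest path is not a cut vertex,'' and some extremal argument of this kind --- rather than an arbitrary last edge of an arbitrary ordering of a minimal spanning structure --- appears unavoidable. As written, your argument establishes the lemma only in the case where some minimal $\mathcal{T}$ admits an ordering ending in an edge with a single old vertex.
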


\begin{proof}

Consider all pairs of edges with the largest distance between them. Among those, choose a pair $E, E'$ 
that has a geodesic $P'$ whose tail has the smallest size. Let  $P'=(E_1, \ldots, E_k)$.
Let  $v\in E_{k-1}\cap E_k$ and $S=E_k-\{v\}$.   We shall show that $S$ does not separate $\HH$.

If $k=2$, then  $\HH$ is intersecting.  Assume that $S$ separates $\HH$, and let $E$ be an edge 
such that  $E-S$ is in a connected component different  from  $E_1-S$ in $\HH-S$.
Then $v\not\in E$.  However, since $\HH$ is intersecting, $E\cap E_1 \neq \emptyset$ and $E\cap E_1 \subseteq E_1\cap S$.
We have that  $E_1\cap E_2 = (E_1 \cap S)\cup \{v\}$. Thus, $|E_1\cap E|<|E_1\cap E_2|$, a contradiction. Thus $S$ does not separate $\HH$.

We can assume that $k\geq 3$.
Note first that $S\cap E_i = \emptyset $ for $i= 1, \ldots, k-2$, otherwise $P'$ is not an $E$-$E'$-geodesic.
Assume that $\HH-S$ has at least two connected components. One of these contains all edges of $P'$ with 
$S$ deleted from them, and the other contains some vertex $u$.  Let $E''$ be an edge incident to $u$. Then an $E$-$E''$-geodesic $P''$  must contain  some 
vertices of $S$  in its edges and it must have length at most $k$.  
So,  $P''= (E_1', E_2', \ldots, E_i',  E_{i+1}', \ldots, E'')$, where $E_i' \cap S \neq \emptyset$ and $E_{i+1}'\cap S \neq \emptyset$.
Then $i\geq k-1$ because otherwise $E_1', \ldots, E_i'$ together with $E'$ contains an $E$-$E'$ path that is shorter than $P'$.   
Thus $i=k-1$ and $E_k'= E''$.  Then $P'$ together with $E''$  forms an $E$-$E''$-geodesic of length $k+1$, unless $E'' $ contains a vertex from $E_{k-1} \cap E_k$.  Since any geodesic in $\HH$ is of length at most $k$, we have that the latter happens and 
$E_1, E_2, \ldots, E_{k-1}, E''$ is an  $E$-$E''$-geodesic   of length $k$. 
However, $v\not\in E''$, so $|E_{k-1} \cap E''|< |E_{k-1} \cap E_k |$, where $E_k=E'$, a contradiction to the choice of $E, E'$ with the smallest  tail-size. 
Thus $S$ does not separate $\HH$.
\end{proof}

\begin{proof}[Proof of Theorem \ref{main}(3)]
We first shall show that $f(4, 3) \leq 3$. Then Lemma \ref{lem::inductDeltaColorable} implies that $f(4, \Delta)\leq \Delta$ for any $\Delta \geq 3$.
 Let~$\HH$ be  a connected $4$-uniform hypergraph, $\Delta(\HH)=3$  and $S$ be a subset of $3$ vertices contained in an edge $E$ of $\HH$ such that 
$\HH-S$ is connected. Such $S$ exists by Lemma \ref{sep}.
Lets order the vertices $V(\HH)$ as $(v_1,v_2,\ldots,v_n)$ such that 
$S=\{v_1, v_2, v_3\}$,    $\{v_n\} = E - \{v_1, v_2, v_3\}$ and for any vertex  $v_i \in \{v_4, \ldots, v_{n-1}\}$ 
there is at least one vertex $v_j$, $j>i$ such that $v_i$ and $v_j$ belong to the same edge. This ordering could be done since 
$\HH-S$ is connected. Then for each $j$,  $j<n$, $v_j$ is the last vertex of at most two edges and $v_n$ is the last vertex of 
at most three edges including $E$.
To find a good  coloring of $\HH$, we use a procedure:   Color  vertex~$v_i$ with a color~$i$, $i=1, 2, 3$.
Assume that $v_1, \ldots, v_{j-1}$ have been colored.  For $j\leq n$,  consider at most two edges $E_1, E_2$, such that 
$v_j$ is the last vertex in them and such that $ E\not\in \{E_1, E_2\}$  (for $j=n$).  
Let $q_i$ be the color of $X_i= E_i-\{v_j\}$ if $X_i$ is monochromatic and let $q_i$ be a unique color of $X_i$ otherwise, 
$i=1,2$.  Then let the color of $v_j$ be from $\{1,2,3\} -\{q_1, q_2\}$. This gives a good coloring of $E_1$ and $E_2$.  
When $j=n$ we need also to worry about the edge $E$ being good. However, since 
$E$ contains the vertices $v_1, v_2, v_3$ of colors $1, 2$ and $3$,   $E$ is good regardless of the color of $v_n$.
\end{proof}

Note however, that for any even $r\geq 2$, there is a $3$-regular hypergraph $\HH$ with every edge of size at most $r$ and $\chicf(\HH)>3$.
Indeed, for $r=2$ this is an ordinary graph $K_4$. For $r\geq 4$, consider $r/2$ vertex disjoint copies of $K_4-e$,  a complete graph on $4$ vertices without an edge,  and form an edge $E$ of size $r$ from all vertices of degree $2$ in these copies of $K_4-e$.  If the resulting hypergraph were to be conflict-free $3$-colorable, then all ordinary edges would be non-monochromatic and in each copy of $K_4-e$, the two vertices of degree $2$ would get the same color. 
Thus, in $E$, each color would be repeated.

\begin{proof}[Proof of Theorem \ref{r=4}]
Let $\HH$ be a connected $4$-uniform hypergraph. If $\Delta(\HH)\geq 3$, then by Theorem \ref{main}(3),  $\chicf(\HH)\leq \Delta$.
If $\Delta=1$ then  $\chicf(\HH) \geq 2 = \Delta +1$.
Assume that $\Delta =2$. It is easy to see that if $\HH$ is not regular, then $\chicf(\HH)\leq \Delta =2$.
Thus, we can assume that $\HH$ is a connected $2$-regular $4$-uniform hypergraph.
From Lemma \ref{lem::duality},  we see that $\chicf(\HH)=2$ iff its dual $4$-regular graph has a $\{1,3\}$-factor. 
Theorem 6.7 from \cite{Kano_factorBook} claims that a connected $4$-regular graph has a $\{1,3\}$-factor iff the number of vertices is even.
Thus $\chicf(\HH)=2$ iff the number $m$  of hyperedges is even.   If $m$ is odd, then $\chicf(\HH)\geq 3$ and $\chicf(\HH)\leq \Delta+1=3$, 
so $\chicf(\HH)=3$.
\end{proof}

\section{Conclusions and Open Questions}

The largest conflict free chromatic number of an $r$-uniform hypergraph with maximum degree $\Delta$ is 
denoted $f(r, \Delta)$.   As for the usual chromatic number, $f(r, \Delta) \leq  \Delta+1$.

Compared to graph/hypergraph Brooks'-type  theorems we show that the upper 
bound $\Delta+1$  is attained by  infinite classes of hypergarphs, 
in particular for each $r \neq 3,5$ and $\Delta =2$.
We prove that $f(r,\Delta)$ is at most $\Delta$ when $r$ is sufficiently large and $\Delta > c \ln r$.

We find a relation between  conflict-free colorings and   ~$\{t,r-t\}$-factors, studied as early as ~$1891$ by Petersen ~\cite{Petersen_RegGraphs}, when 
the existence of a ~$t$-factor in every~$r$-regular graph was proved for $r$ and $t$ even.
For each even~$r\neq 4$ and each odd~$t$ there is an $r$-regular  graph without~$\{t,r-t\}$-factor ~\cite{LuWangYu_factorsRegGraphs, Lu_ParityFactors}.   
 A~$4$-regular graph on~$n$ vertices has a~$\{1,3\}$-factor if and only if~$n$ is even.
 
For odd~$r$ and arbitrary even~$t$ or odd~$t\geq\frac{r}{3}$ every~$r$-regular graphs contains a~$\{t,r-t\}$-factor~\cite{Kano_factorsRegGraphs}.
Akbari and Kano conjectured the existence of such a factor for the remaining range of odd~$t$, see Conjecture \ref{conj}.
We disprove this  for~$r\geq (t+1)(t+2)$. For~$t=1$ this disproves the conjecture for all odd~$r\geq 7$.
An interesting open problem remains to determine whether a $\{1,4\}$-factor exists in any $5$-regular graph. 
As noted by by Akbari et al.~\cite{Akbari_MagicLabelings}, such a factor exists  if and only if there is an assignment  of labels $1$ and $2$ to the edges such that the sum of the labels on the edges incident to each vertex is zero modulo $3$.
There were at least two incomplete proofs for the existence of such a factor posted in the internet by independent groups of researchers.

Besides the value of $f(5,2)$, many other values for $f(r, \Delta)$ are to be determined. 
In particular,  the question of whether $f(r, \Delta ) \leq \Delta$ for any $r, \Delta \geq 3$ remains open except for the 
cases when $r\leq 4$ that we handled.

 \section{Acknowledgements}  The authors thank Torsten Ueckerdt,  Andr\'e K\"undgen, and Alexander Kostochka  for useful discussions.
 In addition, the authors gratefully acknowledge many useful comments of the referee.

\end{document}